\documentclass[11pt,leqno]{article}
\usepackage{graphicx, amsfonts, amsthm, amsxtra, amssymb, verbatim, makeidx}
\usepackage{subeqnarray, relsize}
\usepackage[mathscr]{euscript}
\usepackage{hyperref, tikz-cd}
\usepackage{aliascnt}
\usepackage{booktabs}
\usepackage{stmaryrd} 
\usepackage{soul} 
\usepackage[nameinlink,capitalize,noabbrev]{cleveref}
\hypersetup{
    colorlinks=true,       
    linkcolor=blue,          
    citecolor=blue,        
    filecolor=blue,      
    urlcolor=blue           
}

\newtheorem{theo}{Theorem}
\newtheorem{coro}{Corollary}

\newtheorem{prop}{Proposition}
\newtheorem{conj}{Conjecture}

\theoremstyle{remark}
\newtheorem{rem}{Remark}

\theoremstyle{definition}
\newtheorem{defi}{Definition}
\newtheorem{exam}{Example}

\makeindex
\makeglossary
\begin{document}

\def\U{{\cal U}}   
\def\res{{\rm res}}
\def\cf{r}   

\def\Nn{{\sf n}}
\def\Am{{\sf A}}

\def\Z{\mathbb{Z}}                   
\def\Q{\mathbb{Q}}                   
\def\C{\mathbb{C}}                   
\def\N{\mathbb{N}}                   
\def\Ff{\mathbb{F}}                  
\def\uhp{{\mathbb H}}                
\def\A{\mathbb{A}}                   
\def\dR{{\rm dR}}                    
\def\F{{\cal F}}                     
\def\Sp{{\rm Sp}}                    
\def\Gm{\mathbb{G}_m}                 
\def\Ga{\mathbb{G}_a}                 
\def\Tr{{\rm Tr}}                      
\def\tr{{{\mathsf t}{\mathsf r}}}                 
\def\spec{{\rm Spec}}            
\def\proj{{\rm Proj}}
\def\ker{{\rm ker}}              
\def\GL{{\rm GL}}                

\def\k{{\sf k}}                     
\def\ring{{\sf R}}                   
\def\sk{{\mathfrak k }}             
\def\sring{{\mathfrak R }}          

\def\X{{\sf X}}                      
\def\T{{\sf T}}                      
\def\V{{    V}}                   

\def\Ts{{\sf S}}
\def\cmv{{\sf M}}                    
\def\BG{{\sf G}}                       
\def\podu{{\sf pd}}                   
\def\ped{{\sf U}}                    
\def\per{{\sf  P}}                   
\def\gm{{\sf  A}}                    
\def\gma{{\sf  B}}                   
\def\ben{{\sf b}}                    

\def\Rav{{\mathfrak M }}                     
\def\Ram{{\cal C}}                         
\def\Rap{{i(\Lie(\BG))}}                    

\def\nov{{  n}}                    
\def\mov{{  m}}                    
\def\Yuk{{\sf Y}}                     
\def\Ra{{\sf R}}                      

\def\Da{{\sf D}}                      

\def\hn{{\sf h}}                      
\def\cpe{{\sf C}}                     
\def\g{{\sf g}}                       
\def\t{{   t}}                       
\def\v{{   v}}                       

\def\pedo{{\sf  \Pi}}                  

\def\Der{{\rm Der}}                   
\def\MMF{{\sf MF}}                    
\def\codim{{\rm codim}}                
\def\dim{{\rm    dim}}                
\def\Lie{{\rm Lie}}                   
\def\gg{{\mathfrak  g}}                

\def\u{{\sf u}}                       

\def\imh{{  \Psi}}                 
\def\imc{{  \Phi }}                  
\def\stab{{\rm Stab }}               
\def\Vec{{\Theta}}                 
\def\prim{{0}}                  
\def\Zero{{\rm Zero}}                  

\def\Fg{{\sf F}}     
\def\hol{{\rm hol}}  
\def\non{{\rm non}}  
\def\alg{{\rm alg}}  
\def\an{{\rm an}}   
\def\for{{\rm for}}  

\def\bcov{{\rm \O_\T}}       

\def\leaves{{\cal L}}        

\def\Hse{{\rm HS}}        
\def\Hpo{{\rm HP}}        
\def\Hfu{{\rm HF}}        
\def\Hsc{{\rm Hilb}}     

\def\TS{\mathlarger{{\bf T}}}                
\def\IS{\mathlarger{{\cal I}}}                

\def\vf{{\sf v}}                      
\def\wf{{\sf w}}                      

\def\red{{\rm red}}                           

\def\Ua{{   L}}                      
\def\plc{{ Z_\infty}}    

\def\gru{\mu} 
\def\pg{{ \sf S}}               
\def\group{{ G}}            

\def\GM{{\rm GM}}

\def\perr{{\sf q}}        
\def\perdo{{\cal K}}   
\def\sfl{{\mathrm F}} 
\def\sp{{\mathbb S}}  

\newcommand\diff[1]{\frac{d #1}{dz}} 
\def\End{{\rm End}}              

\def\sing{{\rm Sing}}            
\def\cha{{\rm char}}             
\def\Gal{{\rm Gal}}              
\def\jacob{{\rm jacob}}          
\def\tjurina{{\rm tjurina}}      
\newcommand\Pn[1]{\mathbb{P}^{#1}}   
\def\P{\mathbb{P}}                   
\def\Ff{\mathbb{F}}                  

\def\O{{\cal O}}                     
\def\as{\mathbb{U}}                  
\def\ring{{\mathsf R}}                         
\def\R{\mathbb{R}}                   

\newcommand\ep[1]{e^{\frac{2\pi i}{#1}}}
\newcommand\HH[2]{H^{#2}(#1)}        
\def\Mat{{\rm Mat}}              
\newcommand{\mat}[4]{
     \begin{bmatrix}
            #1 & #2 \\
            #3 & #4
       \end{bmatrix}
    }                                
\newcommand{\matt}[2]{
     \begin{bmatrix}                 
            #1   \\
            #2
       \end{bmatrix}
    }
\def\cl{{\rm cl}}                

\def\hc{{\mathsf H}}                 
\def\Hb{{\cal H}}                    
\def\pese{{\sf P}}                  

\def\PP{\tilde{\cal P}}              
\def\K{{\mathbb K}}                  

\def\M{{\cal M}}
\def\RR{{\cal R}}
\newcommand\Hi[1]{\mathbb{P}^{#1}_\infty}
\def\pt{\mathbb{C}[t]}               
\def\W{{\cal W}}                     
\def\gr{{\rm Gr}}                
\def\Im{{\rm Im}}                
\def\Re{{\rm Re}}                
\def\depth{{\rm depth}}
\newcommand\SL[2]{{\rm SL}(#1, #2)}    
\def\sl{{\rm SL}}                    
\newcommand\PSL[2]{{\rm PSL}(#1, #2)}  
\def\Resi{{\rm Resi}}              

\def\L{{\cal L}}                     
\def\Aut{{\rm Aut}}              
\def\any{R}                          
\newcommand\ovl[1]{\overline{#1}}    

\newcommand\mf[2]{{M}^{#1}_{#2}}     
\newcommand\mfn[2]{{\tilde M}^{#1}_{#2}}     

\newcommand\bn[2]{\binom{#1}{#2}}    
\def\ja{{\rm j}}                 
\def\Sc{\mathsf{S}}                  
\newcommand\es[1]{g_{#1}}            
\newcommand\WW{{\mathsf W}}          
\newcommand\Ss{{\cal O}}             
\def\rank{{\rm rank}}                
\def\Dif{{\cal D}}                   
\def\gcd{{\rm gcd}}                  
\def\zedi{{\rm ZD}}                  
\def\BM{{\mathsf H}}                 
\def\plf{{\sf pl}}                             
\def\sgn{{\rm sgn}}                      
\def\diag{{\rm diag}}                   
\def\hodge{{\rm Hodge}}
\def\HF{{\sf F}}                                
\def\WF{{\sf W}}                               
\def\HV{{\sf HV}}                                
\def\pol{{\rm pole}}                               
\def\bafi{{\sf r}}
\def\Id{{\rm Id}}                               
\def\gms{{\sf M}}                           
\def\Iso{{\rm Iso}}                           

\def\hl{{\rm L}}    
\def\imF{{\rm F}}
\def\imG{{\rm G}}

\def\HL{{\rm Ho}}     
\def\NLL{{\rm NL}}   

\def\RG{{\bf G}}          
\def\rg{{\bf g}}     
\def\rbullet{{\cdot}}
\def\Ld{{\cal L}}      
\def\Ro{{\rm R}}     
\def\ZS{{\rm ZeSc}}     
\def\ZI{{\rm ZeId}}     
 \def\integ{{\rm Int}}  

\def\tmap{{\sf t}}

\def\ivhs{{\rm IVHS}}    
\def\ivhsmaps{{{\Delta}_{}}}   
\def\sch{{\rm Sch}}   
\def\mk{{\mathfrak  m}}   
\def\pk{{\mathfrak  p}}   
\def\qk{{\mathfrak  q}}   

\newcommand\licy[1]{{\mathbb P}^{#1}} 

\def\SS{\mathscr{S}}    

\begin{center}
{\LARGE\bf Hasse-Witt invariants of Calabi-Yau varieties \footnote{\today}
}
\\
\vspace{.1in} {\large {\sc Jin Cao\footnote{School of Mathematics and Physics, University of Science and Technology Beijing, Beijing, China}, Mohamed Elmi\footnote{Yau Mathematical Sciences Center, Tsinghua University, Beijing, China} and Hossein Movasati}}
\footnote{
Instituto de Matem\'atica Pura e Aplicada, IMPA, Estrada Dona Castorina, 110, 22460-320, Rio de Janeiro, RJ, Brazil,
{\tt \href{http://w3.impa.br/~hossein/}{www.impa.br/$\sim$hossein}, hossein@impa.br.}}
\end{center}


\begin{abstract}
We define the Hasse-Witt invariant of Calabi-Yau varieties in two different ways. The first method is through Cartier operator and the second method is through the theory of Calabi-Yau modular forms developed by the third author. We conjecture that these two definitions are equivalent and provide many examples of Calabi-Yau varieties in support of this conjecture. 
\end{abstract}

\section{Introduction}
Let $X$ be an elliptic curve over a perfect field $\sk$ of characteristic $p\not=2,3$ and let $\alpha$ be a regular differential $1$-form on $X$. The Cartier operator $C$ acts on the sheaf of differential $1$-forms on $X$ and the Hasse-Witt invariant is defined by the equality
$$
C(\alpha)={\rm HW}(X,\alpha)^{\frac{1}{p}}\alpha.
$$
The algebra of (elliptic) modular forms for $\SL 2\Z$ gives us another method for computing the Hasse-Witt invariant. It is well-known that this algebra is generated by the Eisenstein series $E_4$ and $E_6$. In particular, this implies that 
\begin{equation}
\label{19122025lastday}
E_{p-1}=A_p\left(\frac{1}{12}E_4,\frac{-1}{216}E_6\right)
\end{equation}
for some polynomial $A_p\in\Q[x,y]$ of weighted degree $p-1$ where $\deg(x)=4,\ \deg(y)=6$ . It turns out that $p$ does not appear in the denominators of the coefficients of $A_p$, we can reduce $A_p$ modulo $p$. If we write the pair $(X,\alpha)$ in the Weierstrass coordinates 
 \begin{equation}
 \label{25112025alirezaakbari}
 \X_{t_2,t_3}: y^2=4x^3-t_2x-t_3,\ \alpha=\frac{dx}{y}
 \end{equation} 
 then the following equality in characteristic $p$
 $$
{\rm HW(X,\alpha)}= A_p(t_2,t_3)
 $$
 is a classical result, mainly attributed to P. Deligne, see \cite[page 90]{ka73}. 

The first part of the above story was developed by N. Katz for arbitrary varieties. In recent years, there has been some interest in similar notions in the case of Calabi-Yau varieties e.g. \cite{Ogus2001}. 

The third author has developed the theory of modular forms for Calabi-Yau varieties (CY modular forms) in many articles with the title "Gauss-Manin connection in disguise", see \cite{ho2020, GMCD-MQCY3} and the reference therein. Much of this work has been motivated by the study of generating series of Gromov-Witten invariants, even though there is no well-known enumerative algebraic geometry for the CY modular forms of the present article. 

The theory of CY modular forms generalizes the theory of quasi-modular forms $\Q[E_2,E_4,E_6]$. In this article, we deal only with part of the algebra of CY modular forms which generalizes $\Q[E_4,E_6]$.  It turns out that $\spec\left(\Z\left[\frac{1}{6(27t_3^2-t_2^3)}, t_2,t_3\right]\right)$ is the moduli scheme of pairs $(X,\alpha)$  and it gives us a geometric incarnation of the classical modular forms. 

We consider the moduli $\Ts$ of  pairs $(X,\alpha)$, where $X\subset\P^N_\C$ is a projective/polarized Calabi-Yau $n$-fold with a fixed Hilbert polynomial $P$, which varies in a component of the  Hilbert scheme ${\rm Hilb}_P(\P^N)$,  and $\alpha$ is a 
holomorphic differential $n$-form on $X$. 
The multiplicative group $\Gm:=(\C^*,\cdot)$ acts on $\Ts$ by:
 $$
(X,\alpha)\bullet a=(X,a^{-1}\alpha),\ a\in \Gm,\ (X,\alpha)\in \Ts.
$$
Let $L$ be the line bundle on the classical moduli space of Calabi-Yau $n$-folds given by holomorphic $n$-forms $\alpha$. The moduli space $\Ts$ is not the total bundle of $L$. It is such a total bundle quotiented by action of automorphisms $X$ on $\alpha$. We expect that 
\begin{conj}
\label{17102025bimsa-1}
The moduli space $\Ts$ has a natural affine scheme structure over $\Z[\frac{1}{N}]$ for some $N\in\N$. We have global regular functions $t_i$ on $\Ts$  and $k_i\in\Z$ such that
\begin{equation}
\label{02122025bri}
t_i(X,a^{-1}\alpha)=a^{k_i}t_i(X,\alpha), 
\end{equation}
the homogeneous polynomial $\Delta\in \Z[\frac{1}{N},t]:= \Z[\frac{1}{N},t_1,t_2,\cdots, t_s],\ \ \deg(t_i)=k_i$, called the discriminant,  such that 
$$
\Ts=\spec\left (\Z\left[ \frac{1}{N\Delta},t\right]/I\right ).
$$
Here, $I$ is a homogeneous ideal in $\Z[\frac{1}{N\Delta},t])$. Moreover, the moduli space is fine, that is, we have a universal family $\X\to\Ts$ which comes together with $\alpha\in H^0(\X,\Omega^n_{\X/\Ts})$. 
\end{conj}
We denote a fiber of $\X\to \Ts$ over $t\in\Ts$ by $\X_t$ which comes together with a unique differntial $n$-form $\alpha_t$. 
A global regular section of $\Ts$ is called a (meromorphic) CY modular form. 
E. Viehweg has constructed the coarse  moduli space of polarized smooth Calabi-Yau varieties defined over algebraically closed fields of characteristic zero as a quasi-projective variety, see \cite{viehweg95}. It does not seem to be difficult to manipulate his construction and prove \cref{17102025bimsa-1}. 
Note that once this conjecture is established, the classical moduli of Calabi-Yau varieties without the differential $n$-form $\alpha$ is the variety given by $I=0$ in the weighted projective space $\P^{k_1,k_2,\ldots, k_s}_\Q\backslash\{\Delta=0\}$. 

The above definition of CY modular forms does not indicate at all what $q$-expansion means. 
This is as follows. Let $X_z,\ z\in \C^m$ be a family of Calabi-Yau $n$-fold with $m:=h^{n-1,1}$ and assume that $z=0$ is a MUM point, see \cite{Morrison1993} for the definition.  Let $\alpha_z$ be a holomorphic $n$-form on $X_z$. We assume that all these are defined over $\Q$. It is well-known that we have cycles $\delta_{i,z}\in H_n(X_z,\Z),\ \ \ i=1,2,\ldots,m+1$ such that 
$$
\int_{\delta_{0,z}}\alpha_z,\ \ \int_{\delta_{i,z}}\alpha_z-\frac{\ln(z_i)}{2\pi\sqrt{-1}}\int_{\delta_{0,z}}\alpha_z,\ \ i=1,2,\ldots,m
$$ 
are holomorphic and the mirror map 
$$
(\C^m,0)\to(\C^m,0),\ \ \ (z_1,z_2,\ldots,z_m)\mapsto (q_1,q_2,\ldots, q_m),\ \ 
q_i:=\exp\left(\frac{\int_{\delta_{i,z}}\alpha_z}{\int_{\delta_{0,z}}\alpha_z}\right)
$$
is a biholomorphism, see \cite{Morrison1993}, and hence, we can invert it $z=z(q)$. We now take a smooth model of $X_z$ over $\Z[\frac{1}{N}]$ for some $N\in\N$, and  take $z$ from a perfect field $\sk$ of characteristic $p\nmid N$ and take $(X_z, \alpha_z)$ defined over $\sk$. We have the Cartier operator $C$ acting on closed differential forms, and $C(\alpha_z)$ is also a global holomorphic $n$-form on $X_z$. Therefore, 
$$
C(X_z, \alpha_z)={\rm HW}(X_z,\alpha_z)^\frac{1}{p}\alpha_z,\ \ \ a(z):={\rm HW}(X_z,\alpha_z)
$$
where ${\rm HW}(X_z,\alpha_z)\in \sk$ is called the Hasse-Witt invariant. It turns out that in all our examples  the Hasse-Witt invariant is a polynomial  in $z$ with coefficients in $\Ff_p$. From another side we have the period integral 
$$
{\rm HP}(X_z,\alpha_z):=
\int_{\delta_{0,z}}\alpha_z\Big / \int_{\delta_{0,0}}\alpha_0
$$
which is the unique holomorphic solution to the Picard-Fuchs system of $(X_z,\alpha_z)$ with the constant term $1$, and moreover, it is $p$-integral, and hence we can take its reduction modulo $p$. It turns out that both quantities ${\rm HP}(X_z,\alpha_z)$ and ${\rm HW}(X_z,\alpha_z)$ can be evaluated at $z=0$.

\begin{conj}\rm
\label{17112025ymsc}
For all primes $p\not| N$,
the Hasse-Witt invariant ${\rm HW}(X_z,\alpha_z)$ up to sign  is the truncation of the holomorphic period at degree $p-1$  and  the quantity
$$
A_p(z):={\rm HP}(X_z,\alpha_z)^{p-1}
{\rm HW}(X_z,\alpha_z)
$$
after inserting the mirror map $A_p(z(q))$ has $p$-integral coefficients and modulo $p$ it is a  constant independent of $z$, more precisely, $A_p(z(q))\equiv_p 1$ (up to sign).
\end{conj}
For the first part of Conjecture \ref{17112025ymsc}, it has been shown in the case of Calabi-Yau varieties realized as some hypersurfaces in a smooth toric variety in \cite{HLYY23} and some three-dimensional reflexive polytopes (see \cite[Main Theorem]{SalernoWhitcher22}). However the $p$-integrality of the mirror map itself, which implies the $p$-integrality of $A_p(z(q))$, is still an unsolved conjecture which has been settled mainly in the case of hypergeometric Calabi-Yau varieties using results of B. Dwork, see \cite[Appendix C]{GMCD-MQCY3} and references therein. This is going to be discussed in the third author's paper \cite{Ibiporanga} in which a larger moduli space (ibiporanga) as in \cite{ho2020, GMCD-MQCY3} is used.    

The first part of the above conjecture might not be so difficult as it only requires the study of the degeneration of Hasse-Witt invariant when $X_z$ becomes singular, see \cref{27112025bimsa}. However, note that in \cref{17112025ymsc} we cannot replace the holomorphic period with its truncated one, and hence consider $A_p(z)=a(z)^{p}$,  since the $q$-expansion has to do with the full period.

Let us now rewrite \cref{17112025ymsc} in the context of \cref{17102025bimsa-1}. Since $\Ts$ is the moduli space of the pairs $(X,\alpha)$, for each $z$ in a small neighborhood of $0\in\C^n$ with $z_i\not=0$, we have a unique $t(z):=(t_1(z),t_2(z),\ldots,t_s(z))\in\Ts$, where $t_i(z)$'s are rational functions in $z$,  such that 
$$
(X_z,\alpha_z)\cong (\X_{t(z)},\alpha_{t(z)}).
$$
We define a special locus
$$
\tilde\Ts:=\left\{ t\in \Ts \ \ \ \ \Big| \ \ \ \ 
\int_{\delta}\alpha_t
=c,\ \ \hbox{for some cycle} \ \  \delta\in H_n(\X_t,\Z) \right \}.
$$
where $c:=\int_{\delta_{0,0}}\alpha_0$ as above. It plays the similar role as the $\tau$-locus defined in \cite[Section 4.2]{GMCD-MQCY3}.
Note that $ \int_{\delta}\alpha_t$ is a holomorphic multi-valued function in $\Ts$. 
By the functional equation of $t_i$'s in \eqref{02122025bri} it follows that such a loci in $t_i$-coordinates is given by 
$$
\left ({\rm HP}(X_z,\alpha_z)^{k_i}t_i(z),\ \ i=1,2,\ldots,s\right )\in\Ts.
$$
After inserting the mirror map we get $q$-expansions of $t_i$'s that we denote by $t_i(q)$. This is the holomorphic incarnation of the algebraic CY modular forms $t_i$ as a regular function on $\Ts$. \cref{17112025ymsc} turns into the following: 
\begin{conj} \label{02112025huairou}
For any prime $p\nmid N$ we have a homogeneous polynomial $A\in \Ff_p[t]$ of degree $p-1$ with $\deg(t_i)=k_i$ such that $C(\alpha)=A_p(t)^{\frac{1}{p}}\alpha$, where $C$ is the Cartier operator. Moreover, if we write the $q$-expansion of $t_i$'s then 
$$
A_p(t_1(q),t_2(q),\cdots, t_s(q))\equiv_p1. 
$$
\end{conj}
We recall that one way to prove \cref{02112025huairou} for elliptic curves is to observe that $A_p(t_2,t_3)$ is a Hecke eigenform of weight $p-1$ with the constant term $1$. This determines $A_p$ uniquely. Unfortunately, all the efforts of the third author to generalize Hecke operators for Calabi-Yau modular forms has failed, see \cite[Section 11.2]{GMCD-MQCY3}. 
We are able to automate a verification of our conjetures and find that

\begin{theo}
\label{28112025bri}

\cref{17102025bimsa-1}, \cref{17112025ymsc} and \cref{02112025huairou} are true for the first $200$ primes and to $O(q^{200})$ for four families of hypergeometric Calabi-Yau threefolds in  
\cite{Morrison:1991cd}. We have $\Ts_\Q=\spec(\Q[t_1, t_k, \frac{1}{k(t_1^k-t_k)}])$. The universal family of hypersurfaces in the weighted projective space $\mathbb{P}^{k_0,k_1,k_2,k_3,k_4}$ is given in table below:

\begin{equation*}
\begin{array}{|c|c|c|}
\hline
\cite{alenstzu} \# & (k_0,k_1,k_2,k_3,k_4) &  f\\[2pt]
   \hline
1 & (1,1,1,1,1) & t_5x_0^5 + x_1^5 + x_2^5+x_3^5 + x_4^5 -t_1x_0x_1x_2x_3x_4\\[2pt]
2 & (5,2,1,1,1) & x_0^2 + x_1^5 + t_{10}x_2^{10} + x_3^{10} + x_4^{10} - t_1 x_0x_1x_2x_3x_4\\[2pt]
7 & (4,1,1,1,1) & x_0^2 + t_8x_1^8 + x_2^8 + x_3^8 + x_4^8 - t_1x_0x_1x_2x_3x_4\\[2pt]
8 &   (1,1,1,2,1) & t_6x_0^6 + x_1^6 + x_2^6 + x_3^3 + x_4^6 - t_1x_0x_1x_2x_3x_4\\[2pt]
\hline
\end{array}
\end{equation*}
The differential $3$-form is given by 
$\alpha=\frac{dx_1\wedge dx_2\wedge dx_3\wedge dx_4}{df}$, 
where $f$ is the equation of the Calabi-Yau threefold  with $x_0=1$. 

\end{theo}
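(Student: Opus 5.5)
The statement is a computational verification, so the plan is to reduce each conjecture, for these four one-parameter hypergeometric families, to explicit finite identities and then check them by computer. I start with \cref{17102025bimsa-1} over $\Q$. Each family is a Dwork-type pencil in $\mathbb{P}^{k_0,\dots,k_4}$, and the equation $f$ is already normalized so that all monomials but $x_0^{d}$ and $x_0\cdots x_4$ have coefficient $1$. The rescalings $x_i\mapsto \lambda_i x_i$ preserving this normal form, together with the induced action on $\alpha=dx_1\wedge\cdots\wedge dx_4/df$, give the weights of $t_1$ and $t_k$. Concretely, $x_0\mapsto a x_0$ with compensating scalings makes $t_1$ weight $1$ and $t_k$ weight $k$, where $k=\sum k_i$ is the degree. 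Any pair $(X,\alpha)$ in the mirror-quintic-type family can be brought to this normal form uniquely up to the finite group $(\mu_{k})^{\ldots}$, which acts trivially on $\alpha$. This gives $\Ts_\Q=\spec\Q[t_1,t_k,\frac{1}{k(t_1^k-t_k)}]$. The discriminant $t_1^k-t_k$, up to a constant absorbed into the normalization, is found as the locus where $f$ becomes singular, via the Jacobian computation at the torus-fixed critical point.

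For \cref{17112025ymsc}, I pass to the affine coordinate $z=t_k/t_1^k$, the natural MUM coordinate at $z=0$. Here the holomorphic period is the hypergeometric series
$$\sum_n \frac{(kn)!}{\prod_i (k_in)!}\,z^n,$$
obtained by residue expansion of $\alpha$. The Hasse–Witt invariant is computed by Katz's formula: $C(\alpha)$ is governed by the coefficient of $(x_0\cdots x_4)^{p-1}$ in $f^{p-1}$. By the multinomial expansion this coefficient is exactly the truncation of the hypergeometric series at degree $p-1$, up to the sign $(-1)^{\ldots}$ and a power of $t_1$, since $\prod k_i$ divides appropriately. This is essentially \cite{HLYY23}, so I invoke it for the first part. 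The remaining claim is that $A_p(z(q))\equiv_p 1$. For it I compute the mirror map $q=z\exp(F_1/F_0)$ from the Frobenius basis, invert it to $O(q^{200})$ over $\Q$, and check $p$-integrality and the congruence for the first $200$ primes. Dwork's congruences give $F_0(z)\equiv F_0^{[p-1]}(z)\,F_0(z^p)$, which heuristically explains the result, but the statement only asks for verification.

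For \cref{02112025huairou}, I homogenize. With $\deg t_1=1$ and $\deg t_k=k$, the polynomial $A_p(t):=t_1^{p-1}\,a(t_k/t_1^k)$ is homogeneous of degree $p-1$ because $a$ has degree at most $(p-1)/k$. The $q$-expansions are $t_1(q)=F_0(z(q))^{-1}$ (a power fixed by the weight) and $t_k(q)=z(q)\,t_1(q)^k$. The two congruences then coincide, which reduces the third conjecture to the second.

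The main obstacle is the exact Cartier computation: pinning down the sign and the power of $t_1$ in $C(\alpha)$ for weighted hypersurfaces, where $x_0=1$ is an affine chart and the weights differ. I would first do it in the torus chart using the Cartier operator on $dx/x$-type forms, and confirm it numerically by point counting mod $p$, since the trace of Frobenius on $H^3$ reduces to $a(z)$ mod $p$. The computational cost of inverting the mirror map to order $200$ is routine.
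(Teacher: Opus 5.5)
Your overall route is the paper's: extract the coefficient of $(x_1\cdots x_4)^{p-1}$ in $f^{p-1}$ (Miller/Katz) to get $A_p(t)=\sum_m c_m t_k^m t_1^{p-1-km}$, insert the period expressions of $t_1,t_k$ and the mirror map, and check by machine. However, your reduction of \cref{02112025huairou} to \cref{17112025ymsc} fails as written, because $t_1(q)=F_0(z(q))^{-1}$ has the wrong exponent.

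Expanding $1/f$ on the torus shows that the period of $\alpha_t$ over $\delta_0$ is, up to a nonzero constant, $t_1^{-1}F_0(t_k/t_1^k)$. It has weight $-1$, as it must, since $t_i(X,a^{-1}\alpha)=a^{k_i}t_i(X,\alpha)$ with $k_1=1$. So the normalization $\int_{\delta_0}\alpha_t=c$ defining the special locus forces $t_1(q)=F_0(z(q))$ and $t_k(q)=z(q)F_0(z(q))^k$. This is the paper's $t_1=F(z)$, $t_{n+2}=zF(z)^{n+2}$. Only then is $A_p(t_1(q),t_k(q))=F_0(z(q))^{p-1}a(z(q))$ literally the quantity in \cref{17112025ymsc}. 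With your choice you would be testing $F_0^{1-p}a$. Given $F_0^{p-1}a\equiv_p 1$, this is $\equiv_p a^2$, which is not $1$. For instance, for family $1$ and $p=7$ one has $A_7=t_1^6+t_1t_5$, i.e.\ $a(z)=1+z$, so the two congruences do not coincide.

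Once the exponent is fixed, your Dwork-congruence remark is more than a heuristic. In $\Ff_p\llbracket z\rrbracket$ one has $F_0(z)\equiv a(z)F_0(z^p)\equiv a(z)F_0(z)^p$, hence $F_0^{p-1}a\equiv 1$ as a power series in $z$ for every $p\nmid k$. The mirror maps of these four hypergeometric families are known to be integral (the Dwork-type results the paper alludes to). Substituting $z(q)$ therefore gives $A_p(z(q))\equiv_p 1$ to all orders in $q$ and for all such $p$. This is stronger than the finite check (first $200$ primes, up to $O(q^{200})$) that the paper reports.
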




We return to the case of elliptic curves and explain \cref{17112025ymsc} for the Legendre family $E_z:y^2=x(x-1)(x-z)$ together with the holomorphic 1-form $\alpha_z = \frac{dx}{y}$. The $1$-form $\alpha_z$ can be integrated over generators $\delta_{0,z}$ and $\delta_{1,z}$ of $H_1(E_z,\mathbb{Z})$. This leads to the  $A$ and $B$ periods
\begin{align*}
\int_{\delta_{0,z}}\alpha_z &= 2\pi F\left(\frac{1}{2} \frac{1}{2},1; z\right)=
2\pi\sum_{k=0}^\infty \frac{(\frac{1}{2})_k (\frac{1}{2})_k}{(k!)^2} z^k
\\
\int_{\delta_{1,z}} \alpha_z &= \frac{2}{\sqrt{-1}}\left(F\left(\frac{1}{2} \frac{1}{2},1; z\right)\ln\left(\frac{z}{16}\right)+G(z)\right),\ \ \mathbb{ }G(z)=4\sum_{k=1}^\infty 
 \frac{(\frac{1}{2})_k (\frac{1}{2})_k}{(k!)^2}
(H_{2k}-H_k)z^k.
\end{align*}
where $H_k$ is the $k^\text{th}$ Harmonic number. See, for example, \cite{Zagier2018ArithmeticTopologyDE}. The periods are annihilated by the Picard-Fuchs operator
\[
\theta^2-z\left(\theta+\frac{1}{2}\right)^2=0,\ \ \ \ \theta:=z\partial_z
~\]
and, by construction, they have monodromy in $SL(2,\mathbb{Z})$. 

Let $p$ be a prime bigger than $3$. The Hasse-Witt invariant of the Legendre family is simply the truncation of the holomorphic period: 
\[
a(z) = (-1)^{\frac{p-1}{2}}
\sum_{k=0}^\frac{p-1}{2} \frac{(\frac{1}{2})_k (\frac{1}{2})_k}{(k!)^2} z^k,
\]
which, modulo $p$, is the coefficient of $x^{p-1}$ in the expression $(x(x-1)(x-z))^{\frac{p-1}{2}}$. 

We define $\tau(z)$ as the ratio of $B$ and $A$ periods and compute the mirror map $q(z)=\frac{z}{16}\exp\left(\dfrac{G(z)}{F(z)} \right)
$.\footnote{Note that $q=e^{\pi i \tau}$.} As expected, we recover the $q$-expansion of the modular lambda function when we invert this power series i.e.
\begin{equation*}
    z(q)=\frac{\theta^4_2(q)}{\theta^4_3(q)}=16 q-128 q^2+704 q^3-3072 q^4+11488 q^5-38400
   q^6+O\left(q^7\right),
\end{equation*}
We verify that, for the first $200$ primes,
$$
A_p(z(q)) = F\left(\frac{1}{2} \frac{1}{2},1; z(q)\right)^{p-1}a(z(q))=(-1)^{\frac{p-1}{2}} + \mathcal{O}(q^{200})  \mod p.
$$
If we consider the moduli space $\Ts$ of $(E,\alpha)$ as before, the previous procedure gives us 
$$
\left(z(q)^2-z(q)+1\right) F\left(\frac{1}{2} \frac{1}{2},1; z(q)\right)^4=E_4(q^2),\ 
$$
$$
\left(z(q)^3-\frac{3}{2}z(q)^2-\frac{3}{2}z(q)+1\right)F\left(\frac{1}{2} \frac{1}{2},1; z(q)\right)^6=E_6(q^2).
$$
\begin{rem}
It is known that:
$$
F\left(\frac{1}{2} \frac{1}{2},1; z(q)\right) = \theta_3^2(q), \ \ \ \theta_3(q) = \sum^{\infty}_{k=-\infty} q^{k^2}.
$$
For example, see \cite[(4.32)]{Wenzhe2021} or \cite[Theorem 8.3]{Chan2020}.
\end{rem}
In a similar way, if we consider the moduli of $(E,\alpha,P,Q)$, where $(E,\alpha)$ as before, and $P$ and $Q$ are 2-torsions with Weil pairing $+1$ then the moduli space is $\spec(\Z[\frac{1}{6t_2s_2(t_2-s_2)},t_2,s_2])$ with the universal family $y^2=x(x-t_2)(x-s_2)$ and we get 
$$
\frac{1}{4}F\left(\frac{1}{2} \frac{1}{2},1; z(q)\right)^2:=q\frac{\partial}{\partial q}\ln(\frac{\theta_2(0|q)}{\theta_4(0|q)})
$$
and
$$
\frac{z(q)}{4}F\left(\frac{1}{2} \frac{1}{2},1; z(q)\right)^2=q\frac{\partial}{\partial q}\ln(\frac{\theta_3(0|q)}{\theta_4(0|q)}),
$$
which generate the algebra of  modular forms  for $\Gamma(2)$, see \cite[Page 334]{ho14}. 

{\bf Acknowledgment:} This article was written during the third author's visit to BIMSA and YMSC at Tsinghua University. We thank S.T.Yau for his efforts in creating these insitutions, where we could meet and discuss mathematics. The first author was supported by the University of Science and Technology Beijing Foundation, China (Grant No. 00007886) and the Fundamental Research Funds for the Central Universities (Grant No. 06320202). The second author would like to acknowledge support from the Shuimu Tsinghua Scholar Program and the China Postdoctoral Science Foundation.

\section{Cartier operator and Hasse-Witt invariant}
\label{27112025bimsa}
In this section we review Cartier operator and Hasse-Witt invariant from \cite{BK05}.
Let $X = \mathrm{Spec}(A)$ be an affine scheme over a perfect field $\sk$ of characteristic $p >0$ and $\Omega_A^1$ be the module of K\"{a}hler differentials of $A$ over $k$, which is equipped with the $\sk$-derivation
\[
d: A \to \Omega^1_A, a \to da.
\]
Then the de Rham complex $(\Omega_A^{\bullet}, d)$ of $A$ consists of the exterior algebra $\Omega_A^{\bullet}$ of $\Omega_A^1$ over $A$ together with the extended maps $d: \Omega^{\bullet}_A \to \Omega^{\bullet}_A$. 
We let
\[
Z^i_A = \{\alpha \in \Omega_A^i \mid d\alpha = 0\},  \ \ \ B^i_A = d \Omega_A^{i-1}, H_A^i = Z^i_A/B^i_A.
\]
Then one may define an $A$-algebra homomorphism:
\begin{eqnarray*} \nonumber
    \gamma: \Omega^{\bullet}_A &\to& H^{\bullet}_A \\
    a_1da_2 \wedge \cdots\wedge da_n &\to& a_1^p a_2^{p-1}da_2 \wedge \cdots \wedge a_n^{p-1} da_n (\mathrm{mod}\ B_A^{\bullet}), 
\end{eqnarray*}
where the $A$-module structure on $H^{\bullet}_A$ comes from the Frobenius action on $A$, see \cite[Lemma 1.3.3]{BK05}. Now we consider $X$ is a scheme over $\sk$ and let $F$ be the relative Frobenius $X \to X^{(p)}$. Then the sheaf-theoretic version of the above construction provide us a unique homomorphism of sheaves of graded-commutative $\mathcal{O}_{X^{(p)}}$-algebras:
\[
\gamma: \Omega^{\bullet}_{X^{(p)}} \to \bigoplus^{\infty}_{i=0} \mathcal{H}^i F_{*} \Omega_X^{\bullet}
\]
which satisfies:
\begin{itemize}
    \item 
    $\gamma(f) = f^p$ for $f \in \mathcal{O}_{X^{(p)}}$;
    \item 
    $\gamma(df) = f^{p-1}df(\mathrm{mod} \ d\mathcal{O}_{X^{(p)}})$ for $f \in \mathcal{O}_{X^{(p)}}$;
    \item 
    For $\alpha, \beta \in \Omega^{\bullet}_{X^{(p)}}$, we have: $\gamma(\alpha + \beta) = \gamma(\alpha) + \gamma(\beta)$ and
    $\gamma(\alpha \wedge \beta) = \gamma(\alpha) \wedge \gamma(\beta)$.
\end{itemize}
We remark that $\gamma$ is an isomorphism when $X$ is a nonsingular variety. The proof is contained in \cite[Theorem 2.1.1]{Katz1972} or \cite[Theorem 1.3.4]{BK05}.
\begin{defi}
Let $X$ is a nonsingular variety of dimension $n$. The inverse of the isomorphism $\gamma$ is called the Cartier operator:
\[
C = \sum^n_{i=0} C_i: \bigoplus^{n}_{i=0} \mathcal{H}^i F_{*} \Omega_X^{\bullet} \to \Omega^{\bullet}_{X^{(p)}}.
\]
\end{defi}
\begin{rem}
For $X$ a nonsingular variety of dimension $n$, we define the Cartier operator for $n$ forms as the composition of the maps $C: F_* \Omega_X^n \to \mathcal{H}^n F_{*} \Omega_X^{\bullet} \to \Omega^{n}_{X^{(p)}}$ by abuse of notations.
\end{rem}

We let $X$ be a smooth family of nonsigular projective Calabi-Yau varieties of dimension $n$ over an integral affine variety $S = \mathrm{Spec}(R)$, where $R$ is a finite generated algebra over a perfect field $\sk$ of characteristic $p > 0$. Then $f_* \Omega_{X/S}$ is a locally free sheaf of rank $1$ over $S$, where $f: X \to S$. Now we fix a basis $\alpha$ of $f_* \Omega_{X/S}$, whose dual basis under the Serre duality is denoted by $\eta \in R^nf_* \mathcal{O}_X$. Note that the absolute Frobenius action $F_{\mathrm{abs}}: \mathcal{O}_X \to \mathcal{O}_X$ induces an endomorphism $F_{\mathrm{abs}}^*$ of $R^nf_* \mathcal{O}_X$. Then:
\begin{defi}
We define the Hasse-Witt invariant of $X$ with respect to $\alpha$ as the element $A_p(X, \alpha) \in R$ such that:
\[
F_{\mathrm{abs}}^*(\eta) = A_p(X, \alpha) \eta.
\]
\end{defi}
\begin{rem}
If we assume further that each fiber of $X$ is a hypersurface, then the Hasse-Witt invariant can be also defined as 
\[
C(F_* \omega) =  A_p(X, \alpha)^{\frac{1}{p}} \omega^{(p)}.
\]
The proof of this equality is based on the algorithm of the computation of Hasse-Witt invariants for hypersurfaces of Katz \cite[Algorithm 2.3.7.14]{Katz1972} and \cite[Corollary 1]{Miller72}, since both of these computational results lead to the same invariant.
\end{rem}

One remarkable property is that the Hasse-Witt invariant satisfies the Picard-Fuchs equation due to the Katz-Igusa-Manin theorem \cite[Proposition 2.3.6.3]{Katz1972}. Applying this result to the family of Calabi-Yau $n$-folds, we have:
\begin{theo}
\label{01122025zutopia}
Let $\mathcal{D}$ be $\sk$-linear differential operator on $S$, which is contained in the algebra generated by $\mathrm{Der}(S/\sk)$ and acts on the de Rham cohomology sheaves $f_* \Omega_{X/S}$ via the Gauss-Manin connection $\nabla$. Suppose that
\[
\nabla_{\mathcal{D}}(\alpha) = 0,
\]
where we view $\alpha$ as a section of $f_* \Omega_{X/S}$ as above.
Then $\mathcal{D}(A_p(X, \alpha)) = 0$.
\end{theo}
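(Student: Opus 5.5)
The plan is to deduce this from the Katz--Igusa--Manin theorem \cite[Proposition 2.3.6.3]{Katz1972}, using that the Hodge bundles involved are line bundles. That theorem says that the Frobenius-linear map $F_{\mathrm{abs}}^*$ on $R^nf_*\mathcal{O}_X$ is horizontal for the Gauss--Manin connection on the top graded piece. More precisely, it intertwines the action of a derivation $D\in\Der(S/\sk)$ on $\mathrm{gr}^0_F H^n_{\dR}(X/S)=R^nf_*\mathcal{O}_X$ with the dual action, via Serre duality, on the filtration piece $F^n=f_*\Omega^n_{X/S}$.

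\textbf{Step 1: dualize the hypothesis.} The cup product pairing $f_*\Omega^n_{X/S}\otimes R^nf_*\mathcal{O}_X\to R^{2n}f_*\Omega^{\bullet}_{X/S}\cong R$ is horizontal for $\nabla$, with $\alpha$ and $\eta$ dual bases. The condition $\nabla_{\mathcal D}\alpha=0$, read in $H^n_{\dR}$ and projected to the line bundle $\mathrm{gr}^n_F$, therefore translates into $\mathcal{D}^*$ annihilating $\eta$. Here $\mathcal D^*$ is the formal adjoint, and $\eta$ is viewed as a section of the dual connection on $\mathrm{gr}^0_F\cong (\mathrm{gr}^n_F)^\vee$.

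\textbf{Step 2: use the Frobenius.} Since $F_{\mathrm{abs}}^*$ is $p$-linear, $F_{\mathrm{abs}}^*(\eta)$ is horizontal in the appropriate sense. This is where Katz's theorem is used: the composition of $F_{\mathrm{abs}}^*$ with the Cartier identification is $\nabla$-horizontal. Consequently, in characteristic $p$, the functions killed by the Picard--Fuchs operator of $\eta$ coincide with the coefficients arising from the dual operator for $\alpha$.

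\textbf{Step 3: coefficient comparison.} Write $F_{\mathrm{abs}}^*(\eta)=A_p(X,\alpha)\eta$ and apply $\mathcal{D}$. Expanding by the Leibniz rule and using that both the Gauss--Manin action and the Frobenius action preserve the rank-one pieces, the equation $\nabla_{\mathcal D}\alpha=0$ reduces to $\mathcal{D}(A_p(X,\alpha))\cdot(\text{unit})=0$. Hence $\mathcal{D}(A_p(X,\alpha))=0$, since $R$ is a domain.

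The main obstacle is Step 1. The hypothesis concerns $\alpha$, while the Hasse--Witt invariant is defined through $\eta$. Passing between them requires care, because $\nabla_{\mathcal D}\alpha=0$ holds in $H^n_{\dR}$, not merely in $f_*\Omega^n$. I would handle this as Katz does, by working with the full de Rham cohomology. In Katz's formulation, the matrix of Hasse--Witt is annihilated by any operator killing the corresponding de Rham class, and one then restricts to the rank-one Hodge piece.
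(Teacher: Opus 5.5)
You are right that everything comes from Katz's Proposition 2.3.6.3; the paper only quotes that result and specializes it to rank one. But the derivation in your Steps 1--3 fails, because it treats the Hodge line bundles as if the Gauss--Manin connection acted on them. It does not. Griffiths transversality only gives $\nabla(f_*\Omega^n_{X/S})\subset \mathrm{Fil}^{n-1}H^n_{\dR}(X/S)\otimes\Omega^1_S$. The component of $\nabla\alpha$ in $\mathrm{gr}^{n-1}$ is the Kodaira--Spencer image of $\alpha$, which is nonzero for the families in this paper; otherwise $\alpha$ would satisfy a first-order equation instead of an order-$(n+1)$ Picard--Fuchs equation. Likewise $\mathrm{Fil}^1$ is not horizontal, so $\nabla$ induces no connection on $R^nf_*\mathcal{O}_X=H^n_{\dR}/\mathrm{Fil}^1$. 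This breaks each step:
the ``dual connection on $\mathrm{gr}^0$'' and the equation $\mathcal{D}^*\eta=0$ in Step 1 have no meaning;
so does the ``horizontality'' of $F^*_{\mathrm{abs}}(\eta)=A_p\eta\in R^nf_*\mathcal{O}_X$ in Step 2;
so does the Leibniz expansion of $\mathcal{D}(A_p\eta)$ in Step 3, and Step 3's claim that the Gauss--Manin action preserves the rank-one pieces is false.
Step 1 also ends with a statement about $\mathcal{D}^*$, while the conclusion is about $\mathcal{D}$.

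The fact you gesture at in Step 2 is correct, but it lives in $H^n_{\dR}(X/S)$, not in $R^nf_*\mathcal{O}_X$, and it is used differently. Represent $\eta$ on an affine cover by a cocycle $(g_I)$ of $\mathcal{O}_X$. The cocycle $(g_I^p)$ satisfies $d(g_I^p)=0$ even for absolute differentials. So it defines a class $\tilde\eta\in H^n_{\dR}(X/S)$, namely the image of the bottom step $F_S^*R^nf_*\mathcal{O}_X$ of the conjugate filtration. This class is horizontal by the Katz--Oda description of $\nabla$, and its image in $R^nf_*\mathcal{O}_X$ is $F^*_{\mathrm{abs}}(\eta)=A_p\eta$. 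Since $\alpha\wedge\Omega^{\ge 1}_{X/S}=0$, the de Rham pairing of $\alpha$ with any class depends only on that class's image in $R^nf_*\mathcal{O}_X$, where it is the Serre pairing. Hence $\langle\alpha,\tilde\eta\rangle=\pm A_p$. The cup product and trace are horizontal and $\nabla\tilde\eta=0$, so $D\langle a,\tilde\eta\rangle=\langle\nabla_D a,\tilde\eta\rangle$ for every $D\in\Der(S/\sk)$. Iterating gives $\mathcal{D}(A_p)=\pm\langle\nabla_{\mathcal{D}}\alpha,\tilde\eta\rangle=0$. This is the content of Katz's proposition. No adjoint operator appears, and the rank-one hypothesis only serves to make $A_p$ a scalar.
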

Note if ${\cal D}:= \sum p_i\frac{\partial^i}{\partial t^i}$ then by definition we have $\nabla_{\cal  D}=\sum p_i\nabla_{\frac{\partial}{\partial t}}^i$. From the above theorem it might be possible to prove the following: 
 The Hasse-Witt invariant of $X_z$ is up to multiplication by a constant  $c_2\in\Ff_p$ 
 the truncation at order $p-1$ of the Taylor sereis of the holomorphic period $\psi_0(z):=(2\pi i)^{-n}\int_{\delta_{0,z}}\alpha_{z}$ at the MUM point. The idea of the proof is as follows: 
 Let us consider the differential sub module $L$ of $\Z[\frac{1}{N}][z_1,z_2,\ldots,\theta_1,\theta_2,\ldots]$ (or Picard-Fuchs system) which annihilates the period $\psi_0(z)$. 
By \cref{01122025zutopia} we know that the Hasse-Witt invariant $a(z)$ satisfies also $L(a(z))=0$. A priori, $a(z)$ might have poles along $z_i=0$, and other degeneracy loci of $X_z$. We must prove that this is not the case, and hence $a(z)$ is a polynomial in $z$. 
Since $0$ is a MUM point, we know that there is a unique meromorphic solution $\psi_0(z)$ to $L=0$.
From all these we might try to get the desired statement. We  must further argue that this polynomial has degree $\leq p-1$.   

\section{Mirror quintic}
Recall that in \cite[Section 3]{GMCD-MQCY3}, the third author constructed the moduli $\sf S$ of pairs $(X, \alpha)$, where $X$ is a mirror quintic and $\alpha$ is a holomorphic differential 3-form on $X$. He further showed that
\[
\Ts_\Q \cong \mathrm{Spec}(\mathbb{Q}[t_1, t_5, \frac{1}{(t_1^5-t_5)t_5}])
\]
and the universal family over $\Ts_\Q$ is given by a desingularization of $X_t: \P\{f=0\}/G$, where
\[
f(x) = -t_5x_0^5 - x_1^5 - x_2^5 - x_3^5 - x_4^5+5t_1x_0x_1x_2x_3x_4
\]
and $\alpha$ in the affine coordinate $x_4=1$ is given by
\[
\alpha= \frac{dx_0 \wedge dx_1 \wedge dx_2} 
{\partial f/\partial x_3}.
\]
The holomorphic period of $\alpha$ along the 3-torus \[\delta_{0,z} = \{(x_0, x_1, x_2, x_3, x_4) \mid |x_0| = |x_1| = |x_2| = \delta, |x_3| \ll 1, x_4=1\}\] for $t_1=1, t_5=z$ 
is given by
$$
5 (2\pi i)^{-3} \int_{\delta_{0,z}} \alpha_z = \sum^{\infty}_{k=0} \frac{(5k)!}{(k!)^5} (\frac{z}{5^5})^k, \quad |z|<1.
$$

More generally, we may consider an arbitrary Dwork family of n-folds in $\mathbb{P}^{n+1}$, together with a choice of holomorphic $n$-form $\alpha$.\cite{GMCD-DF} This leads us to the family 
\[
X_{t_0,t_{n+2}}:f=0,\ \  
f=-t_{n+2}x_0^{n+2} - x_{1}^{n+2}  - \cdots - x_{n+1}^{n+2} + (n+2)t_1x_0x_1\cdots x_{n+1}.
\] 
\[ 
\alpha= \frac{dx_0 \wedge \cdots \wedge dx_{n}}{f_{n}}, f_{n} = \frac{\partial f}{\partial x_{n}},\  \ \hbox{ in the affine coordinate } x_{n+1}=1
\]
The holomorphic period of $\alpha$ for $t_1=1, t_{n+2}=z$  is given by
$$
F(z) = (n+2) (2\pi i)^{-n} \int_{\delta_{0,z}} \alpha_z = \sum^{\infty}_{k=0} \frac{((n+2)k)!}{(k!)^{n+2}} (\frac{z}{(n+2)^{n+2}})^k, \quad |z|<1.
$$


\begin{prop} We let $p$ prime  $p\not|n+2$. Then:
\begin{equation}
C(\alpha) = \left( \sum_{k=0}^{\lfloor \frac{p-1}{n+2} \rfloor} \frac{((n+2)k)!}{(k!)^{n+2}}
t_{n+2}^k ((n+2)t_1)^{p-1-(n+2)k} \right)^{\frac{1}{p}} \alpha
\end{equation}
\end{prop}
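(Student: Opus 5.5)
We will use the Katz/Miller description of the Cartier operator on the top form of a smooth hypersurface, as in the Remark after the definition of the Hasse-Witt invariant. For a smooth hypersurface $X=\{f=0\}\subset\P^{n+1}$ of degree $d=n+2$, the residue form $\alpha=\frac{dx_0\wedge\cdots\wedge dx_n}{f_n}$ is the Poincar\'e residue of $\Omega/f$, with $\Omega$ the standard $(n+1)$-form. We will show that
\[
C(\alpha)=c^{1/p}\,\alpha ,
\]
where $c\in\sk$ is the coefficient of the monomial $(x_0x_1\cdots x_{n+1})^{p-1}$ in $f^{p-1}$. This holds because $\frac{\Omega}{f}=\frac{f^{p-1}\Omega}{f^p}$, and $C$ is $p^{-1}$-linear with $C(g^p\beta)=g\,C(\beta)$. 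Writing $f^{p-1}=\sum_m c_m x^m$, we apply $C$ termwise on $x^m\frac{dx_0\cdots dx_{n+1}}{x_0\cdots x_{n+1}}$ in the torus coordinates. A term $x^m\prod\frac{dx_i}{x_i}$ maps to $x^{m/p}\prod\frac{dx_i}{x_i}$ if $p\mid m$ componentwise, and to $0$ otherwise. Since $\deg f^{p-1}=(p-1)(n+2)$, the only exponent $m+(1,\dots,1)$ divisible by $p$ that survives after dividing by $f^p$ and staying in the class of the residue is $m=(p-1,\dots,p-1)$. The output is then $c_{(p-1,\dots,p-1)}^{1/p}\,\Omega/f$. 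This is exactly the computation in \cite[Algorithm 2.3.7.14]{Katz1972} and \cite[Corollary 1]{Miller72}, which the Remark quoted earlier lets us assume. Concretely, we will cite these to state: the $1\times1$ Hasse-Witt matrix of a CY hypersurface with respect to the residue form is the coefficient of $(x_0\cdots x_{n+1})^{p-1}$ in $f^{p-1}$. The sign convention of $f$ (here with minus signs) and the choice of affine chart $x_{n+1}=1$ only contribute harmless signs, since $(-1)^{p-1}=1$. Residues in different charts agree up to sign, and the sign is fixed by comparing with $\alpha$ as defined.

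The remaining step is a combinatorial coefficient extraction. Put $g=-f=t_{n+2}x_0^{n+2}+x_1^{n+2}+\cdots+x_{n+1}^{n+2}-(n+2)t_1x_0\cdots x_{n+1}$; then $f^{p-1}=g^{p-1}$. Expand by the multinomial theorem, choosing exponents $a_0,\dots,a_{n+1}$ for the pure powers and $b$ for the product term, with $\sum a_i+b=p-1$. The monomial is $x_0^{(n+2)a_0+b}\prod_{i\ge1}x_i^{(n+2)a_i+b}$. Requiring each exponent to equal $p-1$ forces all $a_i$ equal to a common $k$ with $b=p-1-(n+2)k$, and this is consistent with the sum constraint. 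The resulting coefficient is
\[
\frac{(p-1)!}{(k!)^{n+2}\,(p-1-(n+2)k)!}\;t_{n+2}^k\,\bigl(-(n+2)t_1\bigr)^{p-1-(n+2)k},
\]
for $0\le k\le\lfloor\frac{p-1}{n+2}\rfloor$.

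To reach the stated form we reduce mod $p$ using
\[
\frac{(p-1)!}{(p-1-j)!}=(p-1)(p-2)\cdots(p-j)\equiv(-1)^j j!\pmod p,
\]
with $j=(n+2)k$. This turns the coefficient into $(-1)^{(n+2)k}\frac{((n+2)k)!}{(k!)^{n+2}}$. Combined with $(-1)^{p-1-(n+2)k}$ from the sign of the product term, the total sign is $(-1)^{p-1}=1$. This yields exactly the sum in the proposition, with $((n+2)t_1)^{p-1-(n+2)k}$.

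The main obstacle is the first step: justifying rigorously that $C$ acts on the residue form $\alpha$ by the coefficient of $(x_0\cdots x_{n+1})^{p-1}$ in $f^{p-1}$. This requires the hypothesis that $X$ is smooth, i.e.\ $t_{n+2}(t_1^{n+2}-t_{n+2})\neq0$ in $\sk$, which is implicitly assumed. Rather than redoing the \v{C}ech/residue computation, we will rely on the cited theorems of Katz and Miller via the Remark. The sign bookkeeping will be checked against the Legendre-type example in the introduction, the case $n=0$ or $1$.
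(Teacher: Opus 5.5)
Your proposal is correct and is essentially the paper's proof. Both invoke Miller's Corollary~1 (equivalently Katz's algorithm) to reduce $C(\alpha)$ to the coefficient of $(x_0x_1\cdots x_{n+1})^{p-1}$ in $f^{p-1}$, then extract that coefficient with the multinomial theorem and reduce mod $p$ using $\binom{p-1}{j}\equiv(-1)^j$, which is your identity $\frac{(p-1)!}{(p-1-j)!}\equiv(-1)^j j!$. Your claim that $(p-1,\dots,p-1)$ is the only surviving exponent is right by a degree count: each exponent must be $\equiv -1 \pmod p$, hence $\ge p-1$, and they sum to $(n+2)(p-1)$. The only cosmetic difference is sign bookkeeping: you pass to $g=-f$ and use $(-1)^{p-1}=1$, whereas the paper keeps the factor $(-1)^{(n+2)k}$ from the pure-power terms and cancels it against $\binom{p-1}{(n+2)k}\equiv(-1)^{(n+2)k}$.
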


\begin{proof}
Choose the affine open part $x_0 = 1$ of the moduli for Calabi-Yau n-folds. Based on the computational result of Cartier operators of Miller in \cite[Corollary 1]{Miller72}, we get
\begin{equation}
C(\omega) = \frac{\psi(f^{p-1}x_1x_2\cdots x_{n+1})}{x_1x_2\cdots x_{n+1}} \omega,
\end{equation}
where $\psi$ the $p^{-1}$-linear operator defined over $\mathbb{Q}[t_1, t_{n+2}, \frac{1}{(n+2)(t_1^{n+2}-t_{n+2})}][x_1,x_2,\cdots,x_{n+1}]$ given by
\[
\psi(x_1^{i_1}x_2^{i_2}\cdots x_{n+1}^{i_{n+1}}) = \begin{cases}
x_1^{i_1/p}x_2^{i_2/p}\cdots x_{n+1}^{i_{n+1}/p}, & \mathrm{if} \ p|i_j, j=1,2,\cdots, i_{n+1}; \\
0, & \mathrm{otherwise}.
\end{cases}
\]
Note that the coefficient $(x_1x_2\cdots x_{n+1})^{p-1}$ in $f^{p-1}$ is
\begin{equation}
    \begin{split}
  &\sum_{k=0}^{\lfloor \frac{p-1}{n+2} \rfloor}\frac{(p-1)!}{(k!)^{n+2}(p-1-(n+2)k)!} (-1)^{(n+2)k}t_{n+2}^k ((n+2)t_1)^{p-1-(n+2)k} \\
  =& \sum_{k=0}^{\lfloor \frac{p-1}{n+2} \rfloor}\frac{((n+2)k)!}{(k!)^{n+2}}
\binom{p-1}{(n+2)k} (-1)^{(n+2)k} t_{n+2}^k ((n+2)t_1)^{p-1-(n+2)k}.      
    \end{split}
\end{equation}

Together with the identity 
$$
\binom{p-1}{(n+2)k}\equiv_p (-1)^{(n+2)k},
$$
we get the final result.
\end{proof}

The logarithmic period is given by 
$$
(n+2)(2\pi i )^{-n}\int_{\delta_{1,z}}\alpha_z=F(z)
\ln(\frac{z}{(n+2)^{n+2}})+G(z),\ \ 
$$
$$
G(z):= (n+2)\sum_{k=1}^{\infty}  \frac{((n+2)k)!}{(k!)^{n+2}} (\sum_{j=k+1}^{(n+2)k}\frac{1}{j}) (\frac{z}{(n+2)^{n+2}})^k.
$$
The period expression of $t_1$ and $t_{n+1}$ up to some $2\pi i$ factors
are 
$$
t_1=F(z),\ t_{n+2}=zF(z)^{n+2}. 
$$
After inserting mirror map $z(q)$, we get the $q$-expansion of these quantities. For example, when $n=1$, we get the reversion formula:
\[
t_1(q) = \ _2F_1(\frac{1}{3},\frac{2}{3},1;z(q)) = \theta_3(q)\theta_3(q^3)+\theta_2(q)\theta_2(q^3)
\]
and
\[ t_3(q) = z(q) _2F_1(\frac{1}{3},\frac{2}{3},1;z(q)) = 27\frac{\eta^9(q^3)}{\eta^3(q)},\]
whose $q$-expansions are:
$$
t_1(q)= 1+6q+6q^3+6q^4+O\left(q^6\right),\ \   t_3(q)= 27q+81q^2+243q^3+351q^4+729q^5+O\left(q^6\right).\ \ 
$$
See \cite[Section 8]{GMCD-DF} and \cite[Section 5.1]{Younes2019}. These formula has been shown in \cite[Corollary 2.4]{BBG1994} via the cubic theta functions.

For $n=2$, the mirror map is given by
\[
z(q) = q-104q^2+6444q^3-311744q^4+13018830q^5 + O(q^6),
\]
which can be found in \cite[(7.3.7)]{Gannon2023} and \cite[A286329]{oeis}.  Then we have:
$$
t_1(q)=\ _3F_2(\frac{1}{4},\frac{2}{4}, \frac{3}{4};1,1| z(q)) =  \theta_{D_4}^4(q) = 1+24q+24q^2+96q^3+24q^4+144q^5+O(q^6),   
$$
where $\theta_{D_4}(q)$ is the theta series of $D_4$-lattice and 
$$
t_4(q) = z(q) _3F_2(\frac{1}{4},\frac{2}{4}, \frac{3}{4};1,1| z(q))^4 = c \eta^8(q) \eta^8(q^2) = c(q-8q^2+12q^3+64q^4-210q^5+O(q^6)),
$$
where $\eta(q)$ is the eta function and $c$ is a constant. 
For the q-expansion of $t_1(q)$ (resp. $t_4(q)$), see \cite[A004011]{oeis} (resp. \cite[A002288]{oeis}). These results are coincide with the q-expansion computations in \cite{GMCD-DF}. 
\

For $n=3$ we get
\begin{align}
    \begin{split}
        t_1(q)&=1+120 q+21000 q^2+14115000 q^3+13414125000 q^4+15234972675120
   q^5+O\left(q^6\right)\\
   t_5(q) &= q-170 q^2-41475 q^3-32183000 q^4-32678171250 q^5+O\left(q^6\right)\\
    \end{split}
\end{align}
which appear in \cite{GMCD-MQCY3}. 

For $n=4$ we get 
$$
t_1=\frac{1}{6}+20q+82620q^2+O(q^3),\   t_6=\frac{1}{46656}q-\frac{1}{24}q^2 + O(q^2),\ \ 
$$
see \cite[Section 8.3]{GMCD-DF}.

For the purpose of studying supersingular Calabi-Yau threefolds, we have also listed some Hasse-Witt invariants of the mirror quintic in Table~\ref{table: A_p(x,y) of mirror quintic at small primes} as product of irreducible polynomials over $\mathbb{F}_p[x,y]$, where $x:=t_1,\ y:=t_{n+2}$. 
\begin{table}
\begin{equation*}
\begin{array}{|c|l|}
\hline
 p & A_p(x,y) \\
 \hline
 2 & x \\[2pt]
 3 & x^2 \\[2pt]
 5 & x^4 \\[2pt]
 7 & x \left(x^5+y\right) \\[2pt]
 11 & x^{10}+10 x^5 y+y^2 \\[2pt]
 13 & x^2 \left(x^{10}+3 x^5 y+y^2\right) \\[2pt]
 17 & x \left(x^5+16 y\right) \left(x^{10}+2 x^5 y+12 y^2\right) \\[2pt]
 19 & x^3 \left(x^{15}+6 x^{10} y+8 x^5 y^2+7 y^3\right) \\[2pt]
 23 & x^2 \left(x^5+14 y\right) \left(x^5+21 y\right) \left(x^{10}+16 x^5 y+7 y^2\right) \\[2pt]
 29 & x^3 \left(x^{10}+15 y^2\right) \left(x^{15}+4 x^{10} y+24 x^5 y^2+14 y^3\right) \\[2pt]
 31 & \left(x^5+4 y\right) \left(x^5+17 y\right) \left(x^{20}+6 x^{15} y+25 x^{10} y^2+3 x^5 y^3+y^4\right) \\[2pt]
 37 & x \left(x^5+9 y\right) \left(x^5+23 y\right) \left(x^{25}+14 x^{20} y+6 x^{15} y^2+21 x^{10} y^3+17 x^5 y^4+3 y^5\right) \\[2pt]
 41 & \left(x^5+5 y\right) \left(x^{10}+26 x^5 y+27 y^2\right) \left(x^{25}+7 x^{20} y+30 x^{15} y^2+31 x^{10} y^3+12 x^5 y^4+31 y^5\right) \\[2pt]
 43 & x^2 \left(x^{40}+34 x^{35} y+9 x^{30} y^2+31 x^{25} y^3+20 x^{20} y^4+2 x^{15} y^5+41 x^{10} y^6+11 x^5 y^7+29 y^8\right) \\[2pt]
 47 & x \left(x^5+21 y\right) \left(x^{10}+23 x^5 y+17 y^2\right) \left(x^{10}+44 x^5 y+23 y^2\right) \left(x^{20}+32 x^{15} y+25 x^{10} y^2+15 x^5 y^3+3 y^4\right) \\[2pt]
 \hline
\end{array}
\end{equation*}
\caption{ $A_p(x,y)$ of mirror quintic at small primes $p$.}
\label{table: A_p(x,y) of mirror quintic at small primes}
\end{table}

In a similar way we compute the Hasse-Witt invariant of the rest three families of CY threefolds in \cref{28112025bri}. 

\begin{prop} 
\begin{enumerate}
    \item 
For the hypersurface in the weighted projective space $\mathbb{P}^{5,2,1,1,1}$ defined by
\[
-x_0^2 - x_1^5 - t_{10}x_2^{10} - x_3^{10} - x_4^{10} +t_1 x_0x_1x_2x_3x_4 = 0,
\]
we have:
\begin{equation}
C(\alpha) = \left( \sum_{k=0}^{\lfloor \frac{p-1}{10} \rfloor} \frac{(10k)!}{(k!)^3(2k)!(5k)!}t_{10}^k t_1^{p-1-10k} \right)^{\frac{1}{p}} \alpha,
\end{equation}
where $p$ is a prime number such that $p\not|10$.
\item
For the hypersurface in the weighted projective space $\mathbb{P}^{4,1,1,1,1}$ defined by
\[
-x_0^2 -t_8 x_1^8 - x_2^{8} - x_3^{8} - x_4^{8} +t_1 x_0x_1x_2x_3x_4 = 0,
\]
we have:
\begin{equation}
C(\alpha) = \left( \sum_{k=0}^{\lfloor \frac{p-1}{8} \rfloor} \frac{(8k)!}{(k!)^4(4k)!}t_{8}^k t_1^{p-1-10k} \right)^{\frac{1}{p}} \alpha,
\end{equation}
where $p$ is a prime number such that $p\not| 2$.
\item 
For the hypersurface in the weighted projective space $\mathbb{P}^{1,1,1,2,1}  $ defined by
\[
-t_6x_0^6 - x_1^6 - x_2^{6} - x_3^{3} - x_4^{6} +t_1 x_0x_1x_2x_3x_4 = 0,
\]
we have:
\begin{equation}
C(\alpha) = \left( \sum_{k=0}^{\lfloor \frac{p-1}{6} \rfloor} \frac{(6k)!}{(k!)^4(2k)!}t_{6}^k t_1^{p-1-6k} \right)^{\frac{1}{p}} \alpha,
\end{equation}
where $p$ is a prime number such that $p\not|6$.
\end{enumerate}
\end{prop}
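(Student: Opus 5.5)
Each of the three cases can be handled the way the previous proposition handled the Dwork family. We pass to the affine chart where the weighted projective coordinates reduce to a polynomial in four variables. The form $\alpha=\frac{dx_1\wedge\cdots\wedge dx_4}{df}$ is a residue form, so we apply Miller's formula \cite[Corollary 1]{Miller72}:
$$C(\alpha)=\frac{\psi(f^{p-1}x_1x_2x_3x_4)}{x_1x_2x_3x_4}\,\alpha .$$
Here $\psi$ is the $p^{-1}$-linear operator keeping only monomials whose exponents are all divisible by $p$. Miller's formula is stated for weighted homogeneous $f$ as well: the monomial $x_0x_1x_2x_3x_4$ plays the role of the weighted "volume" monomial, since its weighted degree equals the degree of $f$ (that is, $\sum k_i=d$ with $d=10,8,6$). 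Hence the factor $\psi(\cdots)/\prod x_i$ is a constant in the parameters. It equals the coefficient of $(x_0x_1x_2x_3x_4)^{p-1}$ in $f^{p-1}$, computed in homogeneous form.

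The main step is extracting this coefficient by the multinomial expansion of $f^{p-1}$. A general term is
$$\prod_j (\text{monomial}_j)^{a_j}\,(t_1x_0\cdots x_4)^{b},\qquad \textstyle\sum a_j+b=p-1 .$$
Matching the exponent of each $x_i$ to $p-1$ forces every pure-power monomial to appear with multiplicity $c_i k$. Here $k$ is the common multiplicity and $c_i$ is determined by the exponents: in case 1 these are $x_0^2$ with multiplicity $5k$, $x_1^5$ with $2k$, and the three tenth powers each with $k$. The degree count then gives $b=p-1-dk$ with $d=10,8,6$ respectively. Indeed $5k+2k+3k=10k$ in case 1, $4k+4\cdot k=8k$ in case 2, and $4k+2k=6k$ in case 3, using $x_3^3$ with multiplicity $2k$.

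The multinomial coefficient is $\frac{(p-1)!}{\prod (c_ik)!\,(p-1-dk)!}$, which we rewrite as $\frac{(dk)!}{\prod (c_ik)!}\binom{p-1}{dk}$. The signs $(-1)^{dk}$ coming from the minus signs cancel against the congruence $\binom{p-1}{dk}\equiv_p(-1)^{dk}$. Only the parameter $t_d$ carries a power $t_d^k$, and $t_1$ appears to the power $p-1-dk$. The denominators are $(k!)^3(2k)!(5k)!$, $(k!)^4(4k)!$ and $(k!)^4(2k)!$ respectively, which matches the statement. We expect the exponent of $t_1$ in case 2 to come out as $p-1-8k$; the $10k$ printed there looks like a typo.

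The main obstacle is justifying Miller's formula in the weighted projective setting. The varieties are quasi-smooth hypersurfaces with quotient singularities, and the Calabi–Yau threefold is really a resolution of them. We would argue that $\alpha$ and $C(\alpha)$ are regular forms on the smooth open part, and that $H^0(\Omega^3)$ is one-dimensional there and extends to the resolution. Since $C$ commutes with restriction to open sets, it then suffices to compute on a single affine chart with torus coordinates. On that chart Miller's computation, or equivalently Katz's algorithm \cite[Algorithm 2.3.7.14]{Katz1972} for the Hasse–Witt matrix via the coefficient of $(\prod x_i)^{p-1}$ in $f^{p-1}$, applies verbatim. The condition $p\nmid d$ guarantees that the torus chart and the residue description are valid in characteristic $p$.
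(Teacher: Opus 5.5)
Your proposal is correct and follows essentially the paper's route: apply Miller's formula on an affine chart, extract the coefficient of $(x_1x_2x_3x_4)^{p-1}$ in $f^{p-1}$ via the multinomial expansion, and use $\binom{p-1}{dk}\equiv_p(-1)^{dk}$. Working homogeneously with $(x_0\cdots x_4)^{p-1}$ gives the same coefficient by weighted homogeneity, and you are right that the $t_1$ exponent in case 2 should be $p-1-8k$; your remarks on justifying Miller's formula for these quotient-singular weighted hypersurfaces go beyond the paper, which simply applies the formula on the chart $x_0=1$.
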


\begin{proof}
The proof is similar to the proof in Proposition 1. Let's take the first one as an example. Choose the affine open part $x_0 = 1$ of the first family for Calabi-Yau 3-folds. Based on the formula
\begin{equation}
C(\omega) = \frac{\psi(f^{p-1}x_1x_2x_3x_{4})}{x_1x_2x_3x_{4}} \omega,
\end{equation}
we only need to compute the coefficient $(x_1x_2x_3x_4)^{p-1}$ in $f^{p-1}$, which is
\[
\sum_{k=0}^{\lfloor \frac{p-1}{10} \rfloor}\frac{(p-1)!}{(k!)^{3}(2k)!(5k)!(p-1-(10k)!)} (-1)^{10k}t_{10}^k t_1^{p-1-10k}= \sum_{k=0}^{\lfloor \frac{p-1}{10} \rfloor}\frac{(10k)!}{(k!)^{3}(2k)!(5k)!} t_{10}^k t_1^{p-1-10k}.
\]
Hence we have:
\[
C(\alpha) = \left( \sum_{k=0}^{\lfloor \frac{p-1}{10} \rfloor} \frac{(10k)!}{(k!)^3(2k)!(5k)!}t_{10}^k t_1^{p-1-10k} \right)^{\frac{1}{p}} \alpha
\]
in this case.
\end{proof}
\begin{rem}
We also have the following identities:
\begin{eqnarray*}
\frac{(\frac{1}{5})_k (\frac{4}{5})_k (\frac{2}{5})_k (\frac{3}{5})_k }{k!^4} &=& \frac{1}{5^{5k}} \frac{(5k)!}{(k!)^5} \\
\frac{(\frac{1}{10})_k (\frac{9}{10})_k (\frac{3}{10})_k (\frac{7}{10})_k }{k!^4} &=& \frac{1}{10^{3k}5^{2k}2^{5k}} \frac{(10k)!}{(k!)^3 (2k)! (5k)!} \\
\frac{(\frac{1}{8})_k (\frac{7}{8})_k (\frac{3}{8})_k (\frac{5}{8})_k }{k!^4} &=& \frac{1}{8^{4k}2^{4k}} \frac{(8k)!}{(k!)^4 (4k)!} \\
\frac{(\frac{1}{6})_k (\frac{5}{6})_k (\frac{2}{6})_k (\frac{4}{6})_k }{k!^4} &=&\frac{1}{6^{4k}3^{2k}} \frac{(6k)!}{(k!)^4 (2k)!}.
\end{eqnarray*}
\end{rem}

\section{Calabi-Yau Operators}

The function $A_p(z)$, as defined in \cref{17112025ymsc}, can be computed directly from the periods. It follows that specific cases of the conjecture can be tested without any reference to an underlying family of varieties. We test this conjecture for a list of 545 Calabi-Yau operators and find that it fails sometimes. This list of Calabi-Yau opearators consists mainly of operators from the AESZ list \cite{alenstzu} and can be found as an attached file, along with a SageMath Jupyter notebook.

A Calabi-Yau operator is a fourth order linear differential operator in a variable $z$, which satisfies a number of properties that make it a candidate Picard-Fuchs equation for a family of Calabi-Yau threefolds \cite{alenstzu,almzud,vanStratenCYOperators}. These properties include the requirement that $z=0$ is a point of maximal unipotent monodromy (MUM), the mirror map $z(q)$ defined by this MUM point has integer coefficients when expanded in $q$, the genus $0$ instanton numbers are integers up to some overall multiple, etc.

There is no guarantee that all of the Calabi-Yau operators in the AESZ list \cite{alenstzu} are really the Picard-Fuchs operator for a family of Calabi-Yau threefolds with $h^{2,1}=1$, so these examples do not immediately invalidate Conjecture~\ref{17112025ymsc}. It is tempting to speculate that, for a given Calabi-Yau operator, the failure of Conjecture~\ref{17112025ymsc} rules out the existence of an associated smooth Calabi-Yau threefold with $h^{2,1}=1$.


For each operator, we compute the holomorphic period
\begin{equation*}
    \varpi_0(z) = \sum_{n=0}^{\infty}c_nz^n
\end{equation*}
around $z=0$ and normalise it so that $c_0=1$.


From a list of 545 Calabi-Yau operators, we find that 460 operators satisfy
\begin{equation*}
    A_p(z) =\varpi_0(z)^{p-1}~\sum_{n=0}^{p-1}c_nz^n    \equiv_p1 +O(z^{600})
\end{equation*}
for the first $100$ primes. The truncation at $O(z^{600})$ is an arbitrarily chosen cutoff.

The remaining 85 operators do not satisfy this condition. 
However, we still find that
\begin{equation*}
    A_p(z) \equiv_p1 +O(z^{p})
\end{equation*}
for all of the operators that we check and the first $100$ primes.

When Conjecture~\ref{17112025ymsc} fails, it often fails in interesting ways. For example, consider the operator 
\begin{align*}
\label{eq: AESZ 1hat after change of variables}
    \begin{split}
        \mathcal{L}=&\theta^4-2\cdot 5(10000\theta^4 + 12500\theta^3 + 9500\theta^2 + 3250\theta + 399)\\
        &+2^2\cdot5^8z^2(2400\theta^4 + 6000\theta^3 + 6290\theta^2 + 2800\theta + 399)\\
        &-2^5\cdot 5^{14}z^3(4\theta + 3)(80\theta^3 + 240\theta^2 + 221\theta + 42)\\
        &+2^4\cdot5^{20}z^4(4\theta + 1)(4\theta + 3)(4\theta + 7)(4\theta + 9)
    \end{split}
\end{align*}
where $\theta=z\frac{d}{dz}$. This operator can be obtained from operator $\widehat{1}$ in \cite{alenstzu} by making the change of variables $z\mapsto 2z$. The change of variables is needed to ensure that the holomorphic period has integer coefficients.

The singularities of this operator are summarised by the Riemann $\mathcal{P}$-symbol
\begin{equation*}
    \mathcal{P}\left\{\begin{array}{c c c} 0 & \frac{1}{2^3\cdot 5^5} & \infty\\[1pt]
    \hline
    0 & \frac{1}{20} & \frac{1}{4}\\
    0 & \frac{3}{20} & \frac{3}{4}\\
    0 & \frac{7}{20} & \frac{7}{4}\\
    0 & \frac{9}{20} & \frac{9}{4}\\
    \end{array}\right\},
\end{equation*}
which is somewhat exotic because it does not have a conifold point (a singularity with indices $(0,1,1,2)$).

The holomorphic period for this operator is given by
\begin{align*}
    \begin{split}
        \varpi_0(z)=&1+3990 z+49934850 z^2+806586406500 z^3+14635392749853750 z^4+O\left(z^{5}\right)
    \end{split}
\end{align*}
which we use to compute $A_p(z)$ for many primes and to a high order in $z$.

After some experimentation, we are led to the following conjecture, which we verify for the first $120$ primes and to $O(z^{10000})$.

\begin{conj}
\label{01022026aquamatica}
If $p$ is an inert rational prime in $\mathbb{Q}\left(\sqrt{-5}\right)$, then
\begin{equation*}
    A_p(z)\equiv_p \left(\sqrt{1-2^3\cdot5^5 z}\right)^p
\end{equation*}
where the right hand side should be understood as a power series in $z$.
\end{conj}

Conjecture~\ref{01022026aquamatica} is a result of computer experiments. In order to test Conjecture~\ref{01122025zutopia}, we  computed $A_p(z)$ for many primes and to a high order in $z$. A few examples at small primes are listed in Table~\ref{table: A_p(z) at small primes for pullback of operator 1hat}.  It quickly becomes clear that, for half of the primes, $A_p(z)$ contains only $p^{th}$ powers of $z$. In other wrds,
\begin{equation*}
    A_p(z)\equiv_p f_p(z)^p.
\end{equation*}
for some $f_p(z)=\mathbb{F}_p\llbracket z\rrbracket$.
With enough primes, we are able to conjecture that $A_p(z)$ is a $p^{th}$ power whenever $p$ is inert in $\mathbb{Q}\left(\sqrt{-5}\right)$.

At this point, one naturally wonders if the $f_p(z)$ are the mod $p$ reduction of some $f(z)\in\mathbb{Z}\llbracket z\rrbracket$. It is straightforward to search for a candidate $f$ by applying the Chinese remainder theorem term-wise to $f_p$ for many primes. This defines an element of $\left(\mathbb{Z}/N\mathbb{Z}\right)\llbracket z\rrbracket$ for some large $N\in\mathbb{N}$. We then apply the rational reconstruction algorithm term-wise to this power series to find a power series with integer coefficients, which stabilises as we consider more primes and increase $N$. We recognize the resulting power series as the power series expansion of $\sqrt{1-2^3\cdot5^5 z}$. The computer data used in this section can be found in \href{https://w3.impa.br/~hossein/WikiHossein/files/Singular%20Codes/25_02_2026_HW_For_CY/}{third author's webpage.}

Observations similar to Conjecture~\ref{01022026aquamatica} have appeared in \cite{Ducker:2025wfl}, where the authors study a number of operators like the operator $\mathcal{L}$ in equation~\eqref{01022026aquamatica}. These operators are obtained as pullbacks of fifth-order differential operators \cite{almzud,2006math.....12215A}. 

The authors of \cite{Ducker:2025wfl} note that, in specific examples, the fourth order operator cannot have a basis of solutions with rational monodromy and that this is remedied by passing to a double cover of the $z$-plane with quadratic branch points introduced at the two non-zero singularities. In our example, this would make the function $\sqrt{1-2^3\cdot5^5 z}$ single valued. They also note that, without passing to the double cover, the deformation method for computing local zeta functions from Picard-Fuchs equations fails \cite{Candelas:2021tqt}. These observations support the idea that the failure of Conjecture~\ref{17112025ymsc} rules out the existence of an associated smooth Calabi-Yau threefold with $h^{2,1}=1$.


Finally, we note that a single Calabi-Yau operator can have multiple MUM points. The standard example is the Picard-Fuchs operator described by Rodland in \cite{1998math......1092R}. This operator appears as operator 27 in \cite{almzud} and is given by
\begin{align*}
    \mathcal{L}_{Rodland}=&3^{2} \theta^4-3 z\left(173\theta^4+340\theta^3+272\theta^2+102\theta+15\right)\\
    &-2 z^{2}\left(1129\theta^4+5032\theta^3+7597\theta^2+4773\theta+1083\right)\\
    &+2 z^{3}\left(843\theta^4+2628\theta^3+2353\theta^2+675\theta+6\right)\\
    &-z  ^{4}\left(295\theta^4+608\theta^3+478\theta^2+174\theta+26\right)\\
    &+z^{5}\left(\theta+1\right)^4
\end{align*}
It has the Riemann symbol
\begin{equation*}
    \mathcal{P}\left\{\begin{array}{c c c c} -3 & 0 & \omega_i & \infty\\[1pt]
    \hline
    0 & 0 & 0 & 1\\
    1 & 0 & 1 & 1\\
    3 & 0 & 1 & 1\\
    4 & 0 & 2 & 1\\
    \end{array}\right\},
\end{equation*}
\noindent where $\omega_i$ is any one of the solutions of $\omega_i^3-289\omega_i^2-57\omega_i+1=0$. There are MUM points at $z=0$ and $z=\infty$ with holomorphic solutions 
\begin{align*}
    \begin{split}
        \varpi_0^{(0)}(z)&=1+5 z+109 z^2+3317 z^3+121501 z^4+4954505
   z^5+216867925 z^6+O\left(z^{7}\right)\\
   \varpi_0^{(\infty)}(z)&=1+ \frac{17}{z}+ \frac{1549}{z^2}+ \frac{215585}{z^3}+ \frac{36505501}{z^4}+
   \frac{6921832517}{z^5}+ \frac{1412721479989}{z^6}+O\left(z^{7}\right)
    \end{split}
\end{align*}
We find that the Hasse-Witt invariants computed at each of these MUM points are related by $z\rightarrow\frac{1}{z}$. For example, at $p=7$, we find
\begin{align*}
    A_7^{(0)}(z)&=z^6+4 z^5+2 z^4+2 z^2+4 z+1\\
    A_7^{(\infty)}(z)&=\frac{1}{z^6}+\frac{4}{z^5}+\frac{2}{z^4}+\frac{2}{z^2}+\frac{4} {z}+1~.
\end{align*}

\begin{table}
\begin{equation*}
\begin{array}{|c|l|}
\hline
 p & A_p(z) \\[2pt]
 \hline
2 & 1 + O(z^{100}) \\[4pt]
3 & 1 + O(z^{100}) \\[4pt]
5 & 1 + O(z^{100}) \\[4pt]
7 & 1 + O(z^{100}) \\[4pt]
11 & 1 + 7 z^{11} + 3 z^{22} +  z^{33} + 5 z^{44} + 6 z^{55} + 3 z^{66} + O(z^{100}) \\[4pt]
13 & 1 + 6 z^{13} + 8 z^{26} + 4 z^{39} + 9 z^{52} + 5 z^{65} + 7 z^{78} + 12 z^{91} + O(z^{100}) \\[4pt]
17 & 1 + 12 z^{17} + 13 z^{34} + 14 z^{51} + 11 z^{68} + 9 z^{85} + O(z^{100}) \\[4pt]
19 & 1 + 2 z^{19} + 17 z^{38} + 4 z^{57} + 9 z^{76} + 9 z^{95} + O(z^{100}) \\[4pt]
23 & 1 + 22 z^{23} + 12 z^{24} + 15 z^{25} + 7 z^{26} +  z^{27} + 8 z^{29} + 20 z^{30} + 15 z^{31} + 12 z^{32} + 5 z^{33} + 22 z^{34} + 19 z^{35}\\
& ~+ 15 z^{36} + 9 z^{37} + 21 z^{38} + 15 z^{39} +  z^{40} + 22 z^{41} + 3 z^{42} + 3 z^{43} +  z^{44} + 8 z^{45} + 4 z^{46} + 12 z^{47} + 19 z^{48}\\
& ~+  z^{49} + 6 z^{50} + 11 z^{51} + 18 z^{52} +  z^{53} + 11 z^{54} + 18 z^{55} + 5 z^{56} +  z^{57} + 4 z^{58} + 8 z^{59} + 9 z^{60} + 2 z^{61} + 19 z^{62}\\
&~+ 15 z^{63} + 19 z^{64} + 2 z^{65} + 16 z^{66} + 4 z^{67} + 18 z^{68} + 7 z^{69} + 4 z^{70} +  z^{71} + 21 z^{72} + 11 z^{73} + 6 z^{74} + 13 z^{75}\\
& ~+ 18 z^{76} + 16 z^{77} + 21 z^{78} + 21 z^{79} + 8 z^{80} + 3 z^{81} + 6 z^{82} + 10 z^{83} + 15 z^{84} + 22 z^{85} + 4 z^{86} + 17 z^{87} +  z^{88}\\
& ~+ 10 z^{89} + 22 z^{90} + 18 z^{91} + 9 z^{92} + 19 z^{93} + 18 z^{94} + 18 z^{96} + 21 z^{97} +  z^{98} + 8 z^{99} + O(z^{100}) \\[4pt]
29 & 1 + 13 z^{29} + 8 z^{30} + 5 z^{31} + 21 z^{32} + 24 z^{33} + 15 z^{34} + 17 z^{35} + 2 z^{36} + 28 z^{37} + 8 z^{38} + 24 z^{39} + 12 z^{40}\\
&~+ 18 z^{41} + 20 z^{43} + 16 z^{44} + 15 z^{45} + 26 z^{46} + 22 z^{47} + 4 z^{48} + 22 z^{49} + 15 z^{50} + 15 z^{51} + 26 z^{52} + 28 z^{53}\\
&~+ 3 z^{54} + 8 z^{55} + 26 z^{56} + 14 z^{57} + 9 z^{58} + 5 z^{59} +  z^{60} + 13 z^{61} + 14 z^{62} + 9 z^{63} + 2 z^{64} + 10 z^{65} + 21 z^{66}\\
&~+ 24 z^{67} + 10 z^{68} + 10 z^{70} + 26 z^{71} + 17 z^{72} + 14 z^{73} + 2 z^{74} +  z^{75} + 16 z^{76} + 10 z^{77} + 8 z^{78} + 22 z^{79} + 7 z^{80}\\
&~+ 11 z^{81} + 9 z^{82} + 8 z^{83} + 2 z^{84} + 8 z^{85} + 21 z^{86} + 4 z^{87} + 27 z^{88} + 18 z^{89} + 4 z^{90} + 14 z^{91} + 21 z^{92} + 19 z^{93}\\
&~ + 28 z^{94} + 12 z^{95} + 25 z^{96} + 6 z^{97} + 14 z^{98} + 9 z^{99} + O(z^{100}) \\[4pt]
31 & 1 + 24 z^{31} + 22 z^{62} + 30 z^{93} + O(z^{100}) \\[4pt]
37 & 1 + 6 z^{37} + 19 z^{74} + O(z^{100}) \\[4pt]
41 & 1 + 34 z^{41} + 40 z^{42} + 19 z^{43} + 15 z^{45} + 28 z^{46} + 28 z^{47} + 33 z^{48} + 36 z^{49} + 5 z^{50} + 40 z^{51} + 19 z^{52} + 28 z^{53}\\
& ~+ 38 z^{54} + 33 z^{55} + 13 z^{56} + 22 z^{57} + 25 z^{58} + 21 z^{59} + 38 z^{60} + 11 z^{61} + 39 z^{62} + 31 z^{63} +  z^{64} + 19 z^{65}\\
&~ + 31 z^{66} + 10 z^{67} + 8 z^{68} + 4 z^{69} + 6 z^{70} + 35 z^{71} + 32 z^{72} + 9 z^{73} + 39 z^{74} + 38 z^{75} +  z^{76} + 12 z^{77} + 4 z^{78}\\
&~+ 37 z^{79} + 30 z^{80} + 9 z^{81} + 4 z^{82} + 38 z^{83} + 29 z^{84} + 16 z^{85} + 29 z^{86} + 35 z^{87} + 5 z^{88} + 12 z^{89} + 25 z^{90} + 35 z^{91}\\
&~ + 24 z^{92} + 27 z^{93} + 6 z^{94} + 27 z^{95} + 33 z^{96} + 8 z^{97} + 10 z^{98} + 25 z^{99} + O(z^{100}) \\[4pt]
43 & 1 + 40 z^{43} + 35 z^{44} + 11 z^{45} + 22 z^{46} + 34 z^{47} + 20 z^{48} + 6 z^{49} + 40 z^{50} + 29 z^{51} + 14 z^{52} +  z^{53} + 28 z^{54}\\
&~+ 30 z^{55} + 14 z^{56} + 16 z^{57} + 5 z^{58} +  z^{59} + 12 z^{60} + 40 z^{61} + 7 z^{62} + 7 z^{63} + 25 z^{64} + 18 z^{65} + 34 z^{66} + 9 z^{67}\\
&~+ 27 z^{68} + 28 z^{69} + 19 z^{70} + 6 z^{71} + 14 z^{72} + 29 z^{73} + 29 z^{74} + 21 z^{75} + 23 z^{76} + 8 z^{77} + 12 z^{78} + 39 z^{79} + 35 z^{80}\\
&~+ 33 z^{81} + 26 z^{82} + 7 z^{83} + 27 z^{84} + 30 z^{85} + 38 z^{86} + 42 z^{87} + 10 z^{88} + 6 z^{89} + 42 z^{90} + 34 z^{91}+ 36 z^{92} + 4 z^{93}\\
&~+ 20 z^{94} + 4 z^{95} + 25 z^{96} + 26 z^{97} + 10 z^{98} + 20 z^{99} + O(z^{100}) \\[4pt]
47 & 1 + 17 z^{47} + 22 z^{48} + 37 z^{49} + 46 z^{50} + 5 z^{51} + 10 z^{52} + 32 z^{53} + 2 z^{54} + 38 z^{55} + 25 z^{56} + 32 z^{57} + 28 z^{58}\\
&~+ 27 z^{59} + 22 z^{60} + 25 z^{61} + 40 z^{62} + 37 z^{63} + 31 z^{64} + 15 z^{65} + 13 z^{66} + 45 z^{67} + 11 z^{68} + 26 z^{69} + 18 z^{70}\\
&~+ 20 z^{71} + 10 z^{72} + 5 z^{73} + 3 z^{74} + 26 z^{75} + 3 z^{76} + 5 z^{77} + 40 z^{78} + 37 z^{79} + 45 z^{80} + 4 z^{81} + 22 z^{82} +  z^{83}\\
&~+ 39 z^{84} + 7 z^{85} + 3 z^{86} + 4 z^{87} + 3 z^{88} + 21 z^{89} + 33 z^{90} + 27 z^{91} + 20 z^{92} + 30 z^{94} + 15 z^{95} + 10 z^{96} + 24 z^{97}\\
&~+ 13 z^{98} + 43 z^{99} + O(z^{100}) \\[4pt]
53 & 1 + 8 z^{53} + O(z^{100}) \\[4pt]
59 & 1 + 8 z^{59} + O(z^{100}) \\[4pt]
\hline
\end{array}
\end{equation*}
\caption{ $A_p(z)$ of $\mathcal{L}$ for small primes $p$.}
\label{table: A_p(z) at small primes for pullback of operator 1hat}
\end{table}

\newpage
\bibliography{biblio.bib}
\bibliographystyle{alpha}

\printindex

\end{document}